\definecolor{verylight}{gray}{0.97}
\definecolor{light}{gray}{0.9}
\definecolor{medium}{gray}{0.85}
\definecolor{dark}{gray}{0.6}
 \def\NZQ{\mathbb}               
 \def\ZZ{{\NZQ Z}}
 \def\RR{{\NZQ R}}
 \def\G{{\mathcal G}}
 \def\P{{\mathcal P}}
 \def\ab{{\mathbf a}}
 \def\eb{{\mathbf e}}
 \def\opn#1#2{\def#1{\operatorname{#2}}} 
 \opn\chara{char} \opn\length{\ell} \opn\pd{pd} \opn\rk{rk}
 \opn\projdim{proj\,dim} \opn\injdim{inj\,dim} \opn\rank{rank}
 \opn\depth{depth} \opn\grade{grade} \opn\height{height}
 \opn\embdim{emb\,dim} \opn\codim{codim}
 \opn\Tr{Tr} \opn\bigrank{big\,rank}
 \opn\superheight{superheight}\opn\lcm{lcm}
 \opn\trdeg{tr\,deg}
 \opn\reg{reg} \opn\lreg{lreg} \opn\ini{in} \opn\lpd{lpd}
 \opn\size{size} \opn\sdepth{sdepth}
 \opn\link{link}\opn\fdepth{fdepth}\opn\lex{lex}
 \opn\tr{tr}
 \opn\type{type}
 \opn\gap{gap}
 \opn\arithdeg{arith-deg}
 \opn\astab{astab}
  \opn\dstab{dstab}
  \opn\pol{pol}
 \opn\div{div} \opn\Div{Div} \opn\cl{cl} \opn\Cl{Cl}
 \opn\Spec{Spec} \opn\Supp{Supp} \opn\supp{supp} \opn\Sing{Sing}
 \opn\Ass{Ass} \opn\Min{Min}\opn\Mon{Mon}
 \opn\Ann{Ann} \opn\Rad{Rad} \opn\Soc{Soc}
 \opn\Im{Im} \opn\Ker{Ker} \opn\Coker{Coker} \opn\Am{Am}
 \opn\Hom{Hom} \opn\Tor{Tor} \opn\Ext{Ext} \opn\End{End}
 \opn\Aut{Aut} \opn\id{id}
 \opn\nat{nat}
 \opn\pff{pf}
 \opn\Pf{Pf} \opn\GL{GL} \opn\SL{SL} \opn\mod{mod} \opn\ord{ord}
 \opn\Gin{Gin} \opn\Hilb{Hilb}\opn\sort{sort}
 \opn\PF{PF}\opn\Ap{Ap}
 \opn\mult{mult}
 \opn\aff{aff}
 \opn\relint{relint} \opn\st{st}
 \opn\lk{lk} \opn\cn{cn} \opn\core{core} \opn\vol{vol}  \opn\inp{inp} \opn\nilpot{nilpot}
 \opn\link{link} \opn\star{star}\opn\lex{lex}\opn\set{set}
 \opn\width{wd}
 \opn\Fr{F}
 \opn\QF{QF}
 \opn\G{G}
 \opn\type{type}\opn\res{res}
 \opn\conv{conv}
 \opn\mat{mat}
 \opn\gr{gr}
 \def\pot#1#2{#1[\kern-0.28ex[#2]\kern-0.28ex]}
 \opn\dirlim{\underrightarrow{\lim}}
 \opn\inivlim{\underleftarrow{\lim}}
 \def\Implies{\ifmmode\Longrightarrow \else
         \unskip${}\Longrightarrow{}$\ignorespaces\fi}
 \def\implies{\ifmmode\Rightarrow \else
         \unskip${}\Rightarrow{}$\ignorespaces\fi}
 \def\iff{\ifmmode\Longleftrightarrow \else
         \unskip${}\Longleftrightarrow{}$\ignorespaces\fi}
 \newtheorem{Theorem}{Theorem}
 \newtheorem{Lemma}[Theorem]{Lemma}
 \newtheorem{Corollary}[Theorem]{Corollary}
 \newtheorem{Proposition}[Theorem]{Proposition}
 \newtheorem{Question}[Theorem]{Question}
 \let\epsilon\varepsilon
 \let\kappa=\varkappa
 \def\qed{\ifhmode\textqed\fi
       \ifmmode\ifinner\quad\qedsymbol\else\dispqed\fi\fi}
 \def\textqed{\unskip\nobreak\penalty50
        \hskip2em\hbox{}\nobreak\hfil\qedsymbol
        \parfillskip=0pt \finalhyphendemerits=0}
 \def\dispqed{\rlap{\qquad\qedsymbol}}
 \opn\dis{dis}
 \def\pnt{{\raise0.5mm\hbox{\large\bf.}}}
 \opn\Lex{Lex}
\begin{document}

\title {The regularity of edge rings and matching numbers}

\author{J\"urgen Herzog and Takayuki Hibi}

\address{J\"urgen Herzog, Fachbereich Mathematik, Universit\"at Duisburg-Essen, Campus Essen, 45117
Essen, Germany} \email{juergen.herzog@uni-essen.de}

\address{Takayuki Hibi, Department of Pure and Applied Mathematics, Graduate School of Information Science and Technology,
Osaka University, Toyonaka, Osaka 560-0043, Japan}
\email{hibi@math.sci.osaka-u.ac.jp}

\dedicatory{ }

\begin{abstract}
Let $K[G]$ denote the edge ring of a finite connected simple graph $G$ on $[d]$ and $\mat(G)$ the matching number of $G$.
It is shown that $\reg(K[G]) \leq \mat(G)$ if $G$ is non-bipartite and $K[G]$ is normal,  and that $\reg(K[G]) \leq \mat(G) - 1$ if $G$ is bipartite.
\end{abstract}

\thanks{The first author was partially supported by JSPS KAKENHI 19H00637.}

\subjclass[2010]{Primary 13D02; Secondary 05C70}

\keywords{}

\maketitle

\setcounter{tocdepth}{1}

Let $G$ be a finite connected simple graph on the vertex set $[d] = \{1, \ldots, d\}$ and $E(G)$ its edge set.  Let $S = K[x_1, \ldots, x_d]$ denote the polynomial ring in $d$ variables over a field $K$.  The {\em edge ring} of $G$ is the toric ring $K[G] \subset S$ which is generated by those monomials $x_ix_j$ with $\{i, j \} \in E(G)$.  The systematic study of edge rings originated in \cite{OH}.  It is shown that $K[G]$ is normal if and only if $G$ satisfies the odd cycle condition \cite[p.~131]{GTM279}.  Thus in particular $K[G]$ is normal if $G$ is bipartite.

Let $\eb_1, \ldots, \eb_d$ denote the canonical unit coordinate vectors of $\RR^d$.  The {\em edge polytope} is the lattice polytope $\P_G \subset \RR^d$ which is the convex hull of the finite set $\{ \, \eb_i + \eb_j \, : \, \{i, j \} \in E(G) \, \}$.  One has $\dim \P_G = d - 1$ if $G$ is non-bipartite and $\dim \P_G = d - 2$ if $G$ is bipartite.  We refer the reader to \cite[Chapter 5]{GTM279} for the fundamental materials on edge rings and edge polytopes.

A {\em matching} of $G$ is a subset $M \subset E(G)$ for which $e \cap e' = \emptyset$ for $e \neq e'$ belonging to $M$.  The {\em matching number} is the maximal cardinality of matchings of $G$.  Let $\mat(G)$ denote the matching number of $G$.

When $K[G]$ is normal, the upper bound of regularity of $K[G]$ can be explicitly described in terms of $\mat(G)$.  In fact,

\begin{Theorem}
\label{oldhibi}
Let $G$ be a finite connected simple graph.  Then
\begin{enumerate}
\item[(a)] $\reg K[G]\leq \mat(G)$, if  $G$ is non-bipartite and  $K[G]$ is normal;

\item[(b)] $\reg K[G]\leq \mat(G)-1$, if $G$ is bipartite.
\end{enumerate}
\end{Theorem}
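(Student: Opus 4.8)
\medskip
\noindent\textbf{Proof proposal.}
The plan is to pass to the canonical module. Since $K[G]$ is normal --- which is exactly hypothesis~(a), and holds automatically in case~(b) as recalled above --- Hochster's theorem shows that $K[G]$ is Cohen--Macaulay. For a Cohen--Macaulay standard graded $K$-algebra $R$ with $R_0=K$ one has $\reg R=\dim R+a(R)$, where $a(R)$ is the top $a$-invariant, and graded local duality gives $a(R)=-\min\{\,j:(\omega_R)_j\neq 0\,\}$ for $\omega_R$ the graded canonical module. Set $\delta:=\min\{\,j:(\omega_{K[G]})_j\neq 0\,\}$. Recalling that $\dim K[G]=\dim\P_G+1$, so $\dim K[G]=d$ if $G$ is non-bipartite and $\dim K[G]=d-1$ if $G$ is bipartite, both parts of the theorem follow at once from the single inequality
\[
\delta\ \geq\ d-\mat(G),
\]
since then $\reg K[G]=\dim K[G]-\delta$ is $\leq d-(d-\mat(G))=\mat(G)$ in the non-bipartite case and $\leq(d-1)-(d-\mat(G))=\mat(G)-1$ in the bipartite case.

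It remains to prove this inequality, and here I would combine the Danilov--Stanley description of $\omega_{K[G]}$ with two elementary combinatorial observations. Give $K[G]$ the grading in which each generator $x_ix_j$ has degree $1$; then a monomial $\xb^a$ has $\deg\xb^a=\tfrac12\sum_k a_k$. By Danilov--Stanley, $\omega_{K[G]}=\bigoplus_a K\xb^a$, the sum running over the lattice points $a$ lying in the relative interior of the cone $C_G:=\RR_{\geq 0}\{\eb_i+\eb_j:\{i,j\}\in E(G)\}$ and in the subgroup of $\ZZ^d$ generated by the $\eb_i+\eb_j$; since $K[G]$ is normal each such $\xb^a$ actually lies in $K[G]$. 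Hence $\delta$ is the least degree of a monomial $\xb^a\in K[G]$ with $a\in\operatorname{relint}(C_G)$; such $a$ exist --- e.g.\ $\sum_{\{i,j\}\in E(G)}(\eb_i+\eb_j)$ is a strictly positive combination of the generators of $C_G$, hence an interior point --- so $\delta$ is finite. Fix $a$ with $a\in\operatorname{relint}(C_G)$ and $\deg\xb^a=\delta$. \emph{First}, because $G$ is connected every vertex $v$ lies on an edge, so $x_v\geq 0$ is a valid inequality for $C_G$ defining a \emph{proper} face; as the relative interior of $C_G$ meets no proper face, $a_v\geq 1$ for every $v\in[d]$. \emph{Second}, $K[G]$ is generated by the monomials $x_ix_j$, so the degree-$\delta$ monomial $\xb^a$ factors as $\xb^a=\prod_{t=1}^{\delta}x_{p_t}x_{q_t}$ with $\{p_t,q_t\}\in E(G)$; by the first observation the set $\{\,\{p_t,q_t\}:1\leq t\leq\delta\,\}$ meets every vertex of $G$, i.e.\ it is an edge cover. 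Therefore the edge cover number $\rho(G)$ satisfies $\rho(G)\leq\delta$, and Gallai's identity $\mat(G)+\rho(G)=d$ --- valid because a connected graph with an edge has no isolated vertex --- gives $\delta\geq\rho(G)=d-\mat(G)$, as needed.

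I expect that no single step is a serious obstacle: the reduction in the first paragraph is standard homological algebra (Hochster's theorem, the Cohen--Macaulay regularity formula, graded local duality), and Danilov--Stanley can be used as a black box. The points needing care are all bookkeeping: using the grading with $\deg(x_ix_j)=1$ so that both $\delta\geq d-\mat(G)$ and $\deg\xb^a=\tfrac12\sum_k a_k$ are correctly normalized, invoking Danilov--Stanley for the lattice generated by the $\eb_i+\eb_j$ rather than $\ZZ^d$, and tracking $\dim K[G]$ in the two cases so that the extra ``$-1$'' in~(b) appears. The one genuinely delicate point --- and the heart of the matter --- is the \emph{First} observation, that a lattice point in the relative interior of $C_G$ has all coordinates $\geq 1$; this is exactly where connectedness of $G$ is used, and it is what turns a minimal-degree monomial of the canonical module into an honest edge cover, after which Gallai's identity finishes the proof.
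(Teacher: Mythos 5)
Your proposal is correct, and it establishes the theorem by a route that is equivalent in substance to the paper's but packaged differently. The paper works with the Hilbert--Ehrhart series of $K[G]$ and identifies $\reg K[G]$ with the degree $s$ of the $h$-polynomial via $s=(\dim \P_G+1)-\min\{q\geq 1: q(\P_G\setminus\partial\P_G)\cap\ZZ^d\neq\emptyset\}$; you work instead with the canonical module, using Hochster (normal implies Cohen--Macaulay), Danilov--Stanley, and $\reg=\dim+a$. These are the same computation in two languages: your $\delta$ is precisely the paper's minimal dilation $q$ admitting an interior lattice point, and after that point the two proofs coincide --- normality (the integer decomposition property) turns the interior lattice point into a product of $\delta$ edge monomials, positivity of all coordinates makes those edges an edge cover, so $\delta\geq\mu(G)$, and Gallai's identity $\mu(G)=d-\mat(G)$ together with $\dim K[G]=\dim\P_G+1$ yields (a) and (b). The genuine divergence is in the key positivity statement, the paper's Lemma \ref{facet}: the paper deduces that an interior lattice point has all coordinates $\geq 1$ from the Ohsugi--Hibi facet classification (regular and ordinary vertices, fundamental and acceptable sets), a case analysis that itself invokes the integer decomposition property, whereas you obtain it from the one-line convexity fact that $x_v\geq 0$ is a valid inequality on the edge cone which is not identically zero on it (every vertex of a connected graph with an edge lies on an edge), hence is strict on the relative interior. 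Your version is shorter, requires no normality at that step, and avoids the facet description entirely; what the paper's route buys is a proof that stays at the level of the edge polytope and its explicit facet structure from \cite{OH}. Both arguments tacitly assume $G$ has at least one edge, which is harmless.
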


Lemma \ref{facet} stated below, which provides information on lattice points belonging to interiors of dilations of edge polytopes, is indispensable for the proof of Theorem \ref{oldhibi}.

\begin{Lemma}
\label{facet}
Suppose that $(a_1, \ldots, a_d) \in \ZZ^d$ belongs to the interior $q(\P_G \setminus \partial \P_G)$ of the dilation $q \P_G = \{ q \alpha : \alpha \in \P_G \}$, where $q \geq 1$, of $\P_G$.  Then each $a_i \geq 1$. 
\end{Lemma}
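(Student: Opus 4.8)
The plan is to exploit the fact that the edge polytope $\P_G$ lies in the hyperplane $\sum_i x_i = 2$, so that $q\P_G$ lies in $\sum_i x_i = 2q$, and to describe the facets of $\P_G$ explicitly. Recall (see \cite[Chapter 5]{GTM279}) that a point $\alpha = (a_1,\dots,a_d)$ lies in the interior of $q\P_G$ precisely when $\sum_i a_i = 2q$ and $\langle \alpha, \eta\rangle > q\,b_\eta$ for every facet-defining inequality $\langle x,\eta\rangle \geq b_\eta$ of $\P_G$. So the strategy is: (i) identify, for each vertex $i \in [d]$, a facet inequality of $\P_G$ that forces $a_i \geq 1$ when the inequality is strict; (ii) handle the two or three types of facets of an edge polytope separately; (iii) conclude.

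First I would recall the standard description of the supporting hyperplanes of $\P_G$. Among the defining inequalities of $\P_G$ are the coordinate inequalities $x_i \geq 0$ for $i \in [d]$. If $x_i \geq 0$ actually defines a facet of $\P_G$, then for a point $\alpha$ in the interior of $q\P_G$ we get $a_i > q \cdot 0 = 0$, hence $a_i \geq 1$ since $a_i \in \ZZ$, and we are done for that index. The subtlety is that $x_i \geq 0$ need \emph{not} be facet-defining for every $i$: this fails exactly when $i$ is not a ``regular'' vertex, i.e.\ roughly when removing the edges incident to $i$ disconnects things in a bad way, or when $i$ lies in every maximal ``independent set'' sort of configuration. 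The second step is therefore to treat such exceptional vertices $i$ by finding an alternative facet. Here the key structural fact I would invoke is that for non-bipartite $G$ (and $\dim\P_G = d-1$), the other facets of $\P_G$ come from ``independent sets'' $T \subseteq [d]$ with a certain connectivity property, giving inequalities of the shape $\sum_{j \in N(T)} x_j \geq \text{(something)}$ or equivalently $\sum_{j \notin T} x_j \geq |T| + \dots$; and a vertex $i$ for which $x_i\geq 0$ is not a facet must appear as a ``forced'' coordinate in such a facet inequality in a way that still pins down $a_i \geq 1$. In the bipartite case $\P_G$ is (a translate of) the polytope whose facets have an analogous combinatorial description via the bipartition, and the same dichotomy applies.

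The cleanest route, and the one I would actually write, is probably to bypass the facet classification: fix $i$ and choose a maximal matching-like or spanning substructure of $G$ witnessing that the edges at $i$ are ``used.'' Concretely, since $\alpha \in \operatorname{int}(q\P_G)$, we can write $\alpha$ as a positive combination $\alpha = \sum_{e \in E(G)} c_e (\eb_{u(e)} + \eb_{v(e)})$ with all $c_e > 0$ and $\sum_e c_e = q$; then $a_i = \sum_{e \ni i} c_e$. So $a_i = 0$ would force $c_e = 0$ for every edge $e$ incident to $i$, contradicting $c_e > 0$ for all $e$ — \emph{provided} $i$ is not an isolated vertex, which it is not since $G$ is connected with $d \geq 2$ (and the case $d=1$ is vacuous, while $d=2$ is immediate). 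Wait — the representation with \emph{all} $c_e > 0$ is exactly the characterization of the relative interior of $q\P_G$ when $q\P_G$ is the convex hull of the $\eb_{u}+\eb_{v}$; a point is in the relative interior iff it is a strictly positive convex combination of all the vertices. So $a_i = \sum_{e\ni i} c_e > 0$, giving $a_i \geq 1$ by integrality. The main obstacle, and the point that needs care in the write-up, is justifying that ``interior of the dilation'' here means relative interior (interior within the affine hull $\sum_i x_i = 2q$), not topological interior in $\RR^d$ (which would be empty); once that is pinned down, the strictly-positive-convex-combination characterization of relative interior of a polytope finishes it cleanly, and the facet discussion above is only needed if one insists on the hyperplane-inequality formulation.
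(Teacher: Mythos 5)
Your proposal is correct, but it proves the lemma by a genuinely different and more elementary route than the paper. The paper fixes a vertex $i$, invokes the facet classification of $\P_G$ from \cite{OH} (regular/ordinary vertices, fundamental/acceptable independent sets) together with the integer decomposition property coming from normality of $K[G]$, and shows that a lattice point with $a_i=0$ satisfies the equation of an explicit supporting hyperplane (either $x_i=0$ or $\sum_{\xi\in T}x_{\xi}=\sum_{\xi'\in T'}x_{\xi'}$), hence lies in the boundary of $q\P_G$ --- exactly the case analysis your first two paragraphs gesture at but leave vague (the claim that a vertex $i$ for which $x_i\geq 0$ is not facet-defining is still ``pinned down'' by some other facet inequality is precisely the nontrivial content the paper settles via fundamental resp.\ acceptable sets). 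Your actual proof, the third paragraph, bypasses all of this: since $q(\P_G\setminus\partial\P_G)$ is the relative interior of $q\P_G$ (the correct reading, as you note --- the topological interior in $\RR^d$ is empty, and the Ehrhart reciprocity used in the proof of Theorem \ref{oldhibi} refers to the relative interior), and since the relative interior of the convex hull of a finite point set consists exactly of the strictly positive convex combinations of those points (here the points $\eb_u+\eb_v$ with $\{u,v\}\in E(G)$, which are in fact the vertices of $\P_G$), one gets $a_i=\sum_{e\ni i}c_e>0$ because $G$ is connected on $[d]$ with $d\geq 2$, so every $i$ lies on at least one edge; integrality then yields $a_i\geq 1$. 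This is a complete argument, it proves the stronger statement that every real point of the relative interior has all coordinates strictly positive, and --- unlike the paper's proof --- it uses neither the normality of $K[G]$ nor the facet description; normality is only needed afterwards, in Corollary \ref{nice}, to decompose the lattice point into $q$ edges. What the paper's route buys is the identification of the specific facet on which a point with $a_i=0$ would lie, consistent with its systematic reliance on the facet data of \cite{OH}; what your route buys is brevity and a weaker hypothesis.
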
 

\begin{proof}
The facets of $\P_G$ are described in \cite[Theorem 1.7]{OH}.  When $W \subset [d]$, we write $G_W$ for the induced subgraph of $G$ on $W$.  Since $K[G]$ is normal, it follows that $\P_G$ possesses the integer decomposition property \cite[p.~91]{GTM279}.  In other words, each $\ab \in q \P_G \cap \ZZ^d$ is of the form 
\[
\ab = (\eb_{i_1} + \eb_{j_1}) + \cdots + (\eb_{i_q} + \eb_{j_q}),
\]
where $\{i_1, j_1\}, \ldots, \{i_q, j_q\}$ are edges of $G$.

\medskip

\noindent
{\bf (First Step)}  Let $G$ be non-bipartite.  Let $i \in [d]$.  Let $H_1, \ldots, H_s$ and $H'_1, \ldots, H'_{s'}$ denote the connected components of $G_{[d] \setminus \{i\}}$, where each $H_j$ is bipartite and where each $H'_{j'}$ is non-bipartite.  If $s = 0$, then $i \in [d]$ is regular (\cite[p.~414]{OH}) and the hyperplane of $\RR^d$ defined by the equation $x_i = 0$ is a facet of $q\P_G$.  Hence $a_i > 0$.  

Let $s \geq 1$ and $s' \geq 0$.  Let $W_j \cup U_j$ denote the vertex set of the bipartite graph $H_j$ for which there is $a \in W_j$ with $\{a, i\} \in E(G)$, where $U_j = \emptyset$ if $H_j$ is a graph consisting of a single vertex.  Then $T = W_1 \cup \cdots \cup W_s$ is independent (\cite[p.~414]{OH}).  In other words, no edge $e \in E(G)$ satisfies $e \subset T$.  Let $G'$ denote the bipartite graph induced by $T$.  Thus the edges of $G'$ are $\{b, c\} \in E(G)$ with $b \in T$ and $c \in T' = U_1 \cup \cdots \cup U_s \cup \{i\}$.  Then $G'$ is connected with $V(G') = T \cup T'$ its vertex set.  Since the connected components of $G_{[d] \setminus V(G')}$ are $H'_1, \ldots, H'_{s'}$, it follows that $T$ is fundamental (\cite[p.~415]{OH}) and the hyperplane of $\RR^d$ defined by $\sum_{\xi \in T} x_{\xi} = \sum_{\xi' \in T'} x_{\xi'}$ is a facet of $q\P_G$.  Now, suppose that $a_i = 0$.  Since $\P_G$ possesses the integer decomposition property, one has $\sum_{\xi \in T} a_{\xi} = \sum_{\xi' \in T'} a_{\xi'}$.  Hence $(a_1, \ldots, a_d) \in \ZZ^d$ cannot belong to $q(\P_G \setminus \partial \P_G)$.  Thus $a_i > 0$, as desired.

\medskip

\noindent
{\bf (Second Step)}  Let $G$ be bipartite.  If $G$ is a star graph with, say, $E(G) = \{ \{1,2\}, \{1,3\}, \ldots, \{1,d\} \}$, then $\P_G$ can be regarded to be the $(d - 2)$ simplex of $\RR^{d-1}$ with the vertices $(1, 0, \ldots, 0), (0, 1, 0, \ldots, 0),\ldots, (0, \ldots, 0, 1)$.  Thus, since each $(a_1, \ldots, a_d) \in q\P_G \cap \ZZ^d$ satisfies $a_1 = q$, the assertion follows immediately.  In the argument below, one will assume that $G$ is not a star graph.  

Let $i \in [d]$ and $H_1, \ldots, H_s$ the connected components of $G_{[d] \setminus \{i\}}$.  If $s = 1$, then $i \in [d]$ is ordinary (\cite[p.~414]{OH}) and the hyperplane of $\RR^d$ defined by the equation $x_i = 0$ is a facet of $q\P_G$.  Hence $a_i > 0$.

Let $s \geq 2$.  Let $W_j \cup U_j$ denote the vertex set of $H_j$ for which there is $a \in W_j$ with $\{a, i\} \in E(G)$.  Since $G$ is not a star graph, one can assume that $U_1 \neq \emptyset$.  Then $T = W_2 \cup \cdots \cup W_s$ is independent and the bipartite graph induced by $T$ is $G_{[d] \setminus (W_1 \cup U_1)}$.  Hence $T$ is acceptable (\cite[p.~415]{OH}) and the hyperplane of $\RR^d$ defined by $\sum_{\xi \in W_1} x_{\xi} = \sum_{\xi' \in U_1} x_{\xi'}$ is a facet of $q\P_G$.  Now, suppose that $a_i = 0$.  Since $\P_G$ possesses the integer decomposition property, one has $\sum_{\xi \in W_1} a_{\xi} = \sum_{\xi' \in U_1} a_{\xi'}$.  Hence $(a_1, \ldots, a_d) \in \ZZ^d$ cannot belong to $q(\P_G \setminus \partial \P_G)$.  Thus $a_i > 0$, as required.
\end{proof}

We say that a finite subset $L \subset E(G)$ is an {\em edge cover} of $G$ if $\cup_{e \in L}e = [d]$.  Let $\mu(G)$ denote the minimal cardinality of edge covers of $G$.  

\begin{Corollary}
\label{nice}
When $K[G]$ is normal, one has $q \geq \mu(G)$ if $q(\P_G \setminus \partial \P_G) \cap \ZZ^d \neq \emptyset$.
\end{Corollary}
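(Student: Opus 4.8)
The plan is to combine the two facts already established in the excerpt: Lemma \ref{facet} (interior lattice points have all coordinates at least $1$) and the integer decomposition property of $\P_G$ coming from normality of $K[G]$. Concretely, I would start by picking a lattice point $\ab=(a_1,\dots,a_d)\in q(\P_G\setminus\partial\P_G)\cap\ZZ^d$, which exists by hypothesis. Since $K[G]$ is normal, $\P_G$ has the integer decomposition property, so I can write
\[
\ab = (\eb_{i_1}+\eb_{j_1}) + \cdots + (\eb_{i_q}+\eb_{j_q})
\]
for suitable edges $\{i_1,j_1\},\dots,\{i_q,j_q\}\in E(G)$ (possibly with repetitions).

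Next I would invoke Lemma \ref{facet}, which applies precisely because $\ab$ lies in the interior of $q\P_G$: it gives $a_i\geq 1$ for every $i\in[d]$. Now set $L=\{\{i_1,j_1\},\dots,\{i_q,j_q\}\}\subseteq E(G)$, viewed as a set of edges, so that $|L|\leq q$. The equation above shows that the $i$-th coordinate of $\ab$ equals the number of edges among the chosen $\{i_k,j_k\}$ (counted with multiplicity) that contain the vertex $i$; since this count is $a_i\geq 1$, the vertex $i$ lies in at least one edge of $L$. As $i$ was arbitrary, $\bigcup_{e\in L}e=[d]$, i.e.\ $L$ is an edge cover of $G$. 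Hence $\mu(G)\leq |L|\leq q$, which is the desired inequality.

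I do not expect a genuine obstacle here: the statement is a direct corollary of Lemma \ref{facet} plus normality, and the only point requiring a moment's care is that the decomposition of $\ab$ may use an edge more than once, but this only makes the underlying set $L$ smaller, so the bound $|L|\leq q$ still holds. If one wishes, the same argument can be phrased monomially: a lattice point of $q\P_G$ corresponds to a degree-$q$ monomial in $K[G]$, written as a product of $q$ generators $x_ix_j$, and Lemma \ref{facet} says every variable $x_i$ divides this monomial, so the set of edges occurring is an edge cover of size at most $q$.
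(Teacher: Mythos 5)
Your argument is correct and is exactly the argument the paper intends: the paper's proof is a one-line appeal to the integer decomposition property plus Lemma \ref{facet}, and your write-up simply spells out the implicit step that the $q$ edges in the decomposition of an interior lattice point cover every vertex, hence form an edge cover of size at most $q$. No gaps; the remark about repeated edges only shrinking $L$ is the right small point of care.
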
 

\begin{proof}
Since $\P_G$ possesses the integer decomposition property, Lemma \ref{facet} guarantees that, if $\ab \in q(\P_G \setminus \partial \P_G) \cap \ZZ^d$, one has $q \geq \mu(G)$.  
\end{proof}

Once Corollary \ref{nice} is established, to complete the proof of Theorem \ref{oldhibi} is a routine job on computing regularity of normal toric rings. 

\begin{proof}[Proof of Theorem \ref{oldhibi}]
In each of the cases (a) and (b), since the edge ring $K[G]$ is normal, it follows that the Hilbert function of $K[G]$ coincides the Ehrhart function (\cite[p.~100]{GTM279}) of the edge polytope $\P_G$, which says that the Hilbert series of $K[G]$ is of the form
\[
(h_0 + h_1 \lambda + \cdots + h_s \lambda^s)/(1 - \lambda)^{(\dim \P_G) + 1}
\]
with each $h_i \in \ZZ$ and $h_s \neq 0$.  One has
\[
s = (\dim \P_G + 1) - \min\{ \, q \geq 1 \, : \, q(\P_G \setminus \partial \P_G) \cap \ZZ^d \neq \emptyset \, \}.
\]
Now, Corollary \ref{nice} guarantees that
\[
s \leq (\dim \P_G + 1) - \mu(G).
\]
Finally, since $\mu(G) = d - \mat(G)$ (\cite[Lemma 2.1]{HeHi}), one has
\[
\reg K[G] = s \leq \dim \P_G - (d - 1) + \mat(G),
\]
as required.
\end{proof}

When $K[G]$ is non-normal, the behavior of regularity is curious.

\begin{Proposition}
\label{newhibi}
For given integers $0\leq r\leq m$, there exists a finite connected simple graph $G$ such that  $\reg K[G]=r$,
and
\[
 \mat(G)=
\begin{cases}
m, \text{if $G$ is non-bipartite},\\
m+1, \text{if $G$  is bipartite}.
\end{cases}
\]
\end{Proposition}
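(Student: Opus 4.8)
The plan is to assemble every required graph from a short list of small ``base graphs'' by repeatedly performing one operation that changes the matching number by one but leaves the regularity unchanged. That operation is the attachment of a \emph{pendant path of length two}: given a connected graph $H$ and a vertex $u\in V(H)$, let $H'$ be $H$ together with two new vertices $v,w$ and the two new edges $\{u,v\},\{v,w\}$. Then $K[H']=K[H][x_ux_v,\,x_vx_w]$ is a polynomial ring in two variables over $K[H]$: in a relation $\sum_{j,k}c_{jk}(x_ux_v)^j(x_vx_w)^k=0$ with $c_{jk}\in K[H]$, each monomial carries $x_w$ to the power $k$ and $x_v$ to the power $j+k$, so monomials from distinct pairs $(j,k)$ cannot cancel and all $c_{jk}=0$. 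Hence $\reg K[H']=\reg K[H]$, while $\mat(H')=\mat(H)+1$, because adjoining $\{v,w\}$ to a maximum matching of $H$ gives a matching of $H'$ and no matching of $H'$ uses more than one edge at $\{v,w\}$. Iterating, attaching $k$ pairwise disjoint pendant paths of length two keeps the graph connected, preserves bipartiteness and non-bipartiteness, keeps the regularity fixed, and raises the matching number by exactly $k$.

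For the bipartite case, fix $0\le r\le m$ and let $G$ be obtained from the even cycle $C_{2r+2}$ (to be read as a single edge when $r=0$) by attaching $m-r$ disjoint pendant paths of length two. The edge ring $K[C_{2r+2}]$ is a hypersurface: the lattice of integer linear dependences among $\eb_1+\eb_2,\eb_2+\eb_3,\dots,\eb_{2r+2}+\eb_1$ is $\ZZ\cdot(1,-1,1,-1,\dots,1,-1)$, so the toric ideal of $C_{2r+2}$ is generated by the single binomial $y_1y_3\cdots y_{2r+1}-y_2y_4\cdots y_{2r+2}$ of degree $r+1$, and the resolution $0\to S(-(r+1))\to S\to K[C_{2r+2}]\to 0$ gives $\reg K[C_{2r+2}]=r$. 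Therefore $G$ is connected and bipartite, $\reg K[G]=r$, and $\mat(G)=(r+1)+(m-r)=m+1$.

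For the non-bipartite case the base graph depends on $r$. If $r\ge 2$, take $K_{2r}$: it is non-bipartite with $\mat(K_{2r})=r$, its edge ring is normal (being complete, $K_{2r}$ satisfies the odd cycle condition), and by the argument proving Theorem~\ref{oldhibi}, since $\dim\P_{K_{2r}}=2r-1$, one has $\reg K[K_{2r}]=2r-\min\{q\ge 1:q(\P_{K_{2r}}\setminus\partial\P_{K_{2r}})\cap\ZZ^{2r}\ne\emptyset\}$; here the minimum equals $\mu(K_{2r})=r$, the lower bound being Corollary~\ref{nice} and equality being witnessed by the lattice point $(1,1,\dots,1)$, which is $r$ times the centroid of $\P_{K_{2r}}$ and hence lies in the relative interior of $r\P_{K_{2r}}$; thus $\reg K[K_{2r}]=r$. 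If $r=1$, take $K_4\setminus e=C_4+\{1,3\}$; one checks directly that $x_1x_3$ is transcendental over $K[C_4]$ (every monomial of $K[C_4]$ satisfies $\deg_{x_1}+\deg_{x_3}=\deg_{x_2}+\deg_{x_4}$, which multiplication by $x_1x_3$ destroys), so $K[K_4\setminus e]=K[C_4][x_1x_3]$ and $\reg K[K_4\setminus e]=\reg K[C_4]=1$, with $\mat(K_4\setminus e)=2$. If $r=0$, take the triangle $K_3$, whose edge ring is a polynomial ring, so $\reg=0$ and $\mat(K_3)=1$. In every case, attaching the appropriate number of disjoint pendant paths of length two produces, for each admissible $m$, a connected non-bipartite graph with regularity $r$ and matching number $m$. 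The only pairs not covered by a non-bipartite graph are $(r,m)=(0,0)$ and $(r,m)=(1,1)$, and for these no non-bipartite example exists at all --- a connected graph with matching number at most $1$ is an edge, a star, or a triangle, whose edge ring has regularity $0$ --- so there the bipartite construction of the second paragraph supplies the graph.

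The pendant-path device and the tracking of connectedness, bipartiteness and matching numbers are routine. The real content lies in the regularity of the base families: showing that $K[C_{2r+2}]$ is a hypersurface defined by a binomial of degree exactly $r+1$ (i.e.\ computing the rank and a primitive generator of its relation lattice), that $\reg K[K_4\setminus e]=1$, and that $\reg K[K_{2r}]=r$ via Corollary~\ref{nice} together with the explicit interior lattice point $(1,\dots,1)$ of $r\P_{K_{2r}}$. I expect the relation-lattice computation for the even cycle to be the main obstacle, although for this very symmetric family it should go through cleanly.
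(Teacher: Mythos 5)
Your proposal is correct, and it follows the same overall skeleton as the paper --- start from a base graph realizing the desired regularity, then pump up the matching number by attaching paths, using the fact that this only extends the edge ring by polynomial variables --- but the ingredients are genuinely different. The paper takes $K_{2r}$ as the non-bipartite base and cites \cite[Corollary 2.12]{BVV} for $\reg K[K_{2r}]=r$, and takes the complete bipartite graph $K_{r+1,r+1}$ as the bipartite base, citing the regularity of Hibi rings \cite[Theorem 1.1]{EHM}; it then attaches a single path of length $2(m-r)$. You instead make the regularity computations self-contained: for the bipartite case you use the even cycle $C_{2r+2}$, whose toric ideal is principal (rank-one relation lattice, height one prime in a UFD) and generated by the degree-$(r+1)$ cycle binomial, giving $\reg=r$ from the two-step resolution; for $K_{2r}$ you rederive $\reg=r$ from the machinery of the paper itself, namely $\reg K[K_{2r}]=\dim\P_{K_{2r}}+1-\min\{q: q(\P_{K_{2r}}\setminus\partial\P_{K_{2r}})\cap\ZZ^{2r}\neq\emptyset\}$ for the normal (hence Cohen--Macaulay) ring $K[K_{2r}]$, with the lower bound $\min q\geq\mu(K_{2r})=r$ from Corollary \ref{nice} and the upper bound witnessed by the interior lattice point $(1,\dots,1)\in r\P_{K_{2r}}$; both arguments are valid. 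Your version is also more careful at the boundary: the paper's non-bipartite recipe degenerates for $r=0,1$ (where $K_{2r}$ is empty or a single edge), and you patch this with $K_3$ and $K_4\setminus e$ (the transcendence argument for $x_1x_3$ over $K[C_4]$ is fine), correctly observing that for $(r,m)=(0,0),(1,1)$ no non-bipartite example can exist and that the bipartite construction --- which by itself already suffices for the literal existence statement --- covers those pairs. What the paper's route buys is brevity via citations; what yours buys is independence from \cite{BVV} and \cite{EHM} and a complete treatment of the small cases.
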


\begin{proof}
In the non-bipartite case, let $H$ be the complete graph with $2r$ vertices. Its matching number is $r$. It is known from \cite[Corollary 2.12]{BVV} that $\reg K[H]=r$. At one vertex of $H$ we attach a path graph of length $2(m-r)$ and call this new graph $G$. Then  $\mat(G)=m$ and $\reg K[G]=\reg K[H]=r$, because $K[G]$ is just a polynomial extension of $K[H]$.

In the bipartite case, let $H$ be the bipartite graph of type $(r+1, r+1)$. Its matching number is $r+1$. Indeed, $K[H]$ may be viewed as Hibi ring whose regularity is well-known, see for example \cite[Theorem 1.1]{EHM}.  At one vertex of $H$ we attach a path graph of length $2(m-r)$ and call this new graph $G$. Then  $\mat(G)=m+1$ and $\reg K[G]=\reg K[H]=r$, by the same reason as before.
\end{proof}

These bounds for the regularity of $K[G]$ are in general only valid, if $K[G]$ is normal. Consider for example the graph $G$ which consists of two disjoint triangles combined be a path of length $\ell$. Then the defining ideal of $K[G]$ is generated by a binomial of degree $\ell +3$, and hence $\reg K[G]=\ell+2$, while the matching number of $G$ is $2+\lceil \ell/2\rceil$.

\begin{Question}
\label{nobodyknows}{\em
Let $m$ be a positive integer, and consider the set  $\mathcal{S}_m$ of finite connected simple graphs with matching number $m$. Is the there are   bound for  $\reg K[G]$ with $G\in \mathcal{S}_m$, and if such bound exists  is it a linear function of $m$?}
\end{Question}

\end{document}